\theoremstyle{plain}
\newtheorem{thm}{Theorem}
\newtheorem{cor}[thm]{Corollary}
\theoremstyle{definition}
\title{Bounds for invariants of numerical semigroups and Wilf's Conjecture}
\author{Marco D'Anna}
\address[Marco D'Anna]{Dipartimento di Matematica e Informatica, \ Universit\`a di Catania, \  Viale Andrea Doria 6, 
	95125 Catania,Italy}
\email{mdanna@dmi.unict.it}
\author{Alessio Moscariello}
\address[Alessio Moscariello]{Dipartimento di Matematica e Informatica, \ Universit\`a di Catania, \  Viale Andrea Doria 6, 
	95125 Catania,Italy}
\email{alemoscariello@hotmail.it}
\subjclass[2020]{05A99, 11B75, 20M14}
\keywords{Wilf Conjecture, numerical semigroups, multiplicity, embedding dimension, type, almost symmetric numerical semigroup}
\begin{document}

\maketitle

\begin{abstract}
	Given coprime positive integers $g_1 < \ldots < g_e$, the Frobenius number $F=F(g_1,\ldots,g_e)$ is the largest integer not representable as a linear combination of $g_1,\ldots,g_e$ with non-negative integer coefficients. Let $n$ denote the number of all representable non-negative integers less than $F$; Wilf conjectured that $F+1 \le e n$. We provide bounds for $g_1$ and for the type of the numerical semigroup $S=\langle g_1,\ldots,g_e \rangle$ in function of $e$ and $n$, and use these bounds to prove that $F+1 \le q e n$, where $q= \left \lceil \frac{F+1}{g_1} \right \rceil$, and $F+1 \le e n^2$. Finally, we give an alternative, simpler proof for the Wilf conjecture if the numerical semigroup $S=\langle g_1,\ldots,g_e \rangle$ is almost-symmetric.

\end{abstract}

\section{Introduction}

The classical money-changing problem consists of finding what sums of money can be changed, using $e$ different denominations of coins $2 \le g_1 < \ldots < g_e$. Assuming, without loss of generality, that $\gcd(g_1,\ldots,g_e)=1$, it is well-known that only a finite number of sums cannot be changed, and there exists a maximum integer $F=F(g_1,\ldots,g_e)$ which cannot be represented as a linear combination of the \emph{generators}  $g_1,\ldots,g_e$, with coefficients in the set of natural numbers $\mathbb{N}$.

Determining this maximum integer $F$, called the Frobenius number, is the subject of the Diophantine Frobenius Problem. This challenging problem has been extensively studied over the past decades, and presents applications in several areas of mathematics, including Commutative Algebra, Combinatorics and Coding Theory (see \cite{RA} for a monograph on this problem). Nonetheless, as of today there is an exact solution only for the special case $e=2$, where Sylvester showed that $F=g_1g_2-g_1-g_2$. In the general case, it is known that no polynomial formula for $F$ in function of $g_1,\ldots, g_e$ can exist (cf. \cite{C}), and presently the literature is mostly focused on finding algorithms and bounds for $F$.

In 1978, H.S. Wilf proposed an upper bound for the Frobenius number $F$ (cf. \cite{W}), namely
\begin{equation}
	F+1 \le en,
\end{equation}
where $n$ is the number of solutions of the money-changing problem for $g_1,\ldots,g_e$ less than $F$ (actually, Wilf's original question was a lower bound for $e$, but we choose this equivalent and simpler formulation of his conjecture). 

This problem, now known as the Wilf Conjecture, has been considered by several authors; however, only special cases have been solved. For instance, it is known that the Conjecture is true in the following cases: $e \le 3$ (cf. \cite{FGH}), $|\mathbb{N} \setminus S| \le 65$ (where $S$ denotes the numerical semigroup generated by $g_1, \dots, g_e$; cf. \cite{BAR}), $e \ge \frac{g_1}{3}$ (cf. \cite{E2}), when $g_1$ is large enough and its prime factors are not smaller than $\left \lceil \frac{g_1}{e} \right \rceil$ (cf. \cite{MS}), if $F+1 \le 3g_1$ (cf. \cite{E1}). Most notably, the last case, due to Eliahou, coupled with a previous result by Zhai (cf. \cite{Z}), infers that the Wilf Conjecture is, in a sense, asymptotically true; the survey \cite{D} describes the state of the research on the Wilf Conjecture.

Despite this vibrant literature, the general case is still very elusive, and in fact, no bound for $F$ in function of $e$ and $n$, that holds true for all numerical semigroups, is known. In this work, we provide such a bound, by virtue of a bound for the smallest generator $g_1$ in function of $e$ and $n$.

\begin{thm}\label{Wilf}
	Let $g_1,\ldots,g_e$ be coprime positive integers larger than $1$, let $F$ be the Frobenius number, $n$ be the number of integers less than $F$ which are representable as a linear combination with coefficients in $\mathbb{N}$ of $g_1,\ldots,g_e$, and $\displaystyle q=\left \lceil \frac{F+1}{g_1} \right \rceil$. Then
	\begin{enumerate}
		\item $F+1 \le q e n;$
		\item $F+1 \le e n^2.$
	\end{enumerate}	
\end{thm}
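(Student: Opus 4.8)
The plan is to deduce both statements from a single inequality bounding the smallest generator, namely $g_1 \le e n$ (this is the ``bound for $g_1$'' advertised in the abstract; the type bound mentioned there will not be needed for this theorem). Granting $g_1 \le en$, statement (1) is immediate: by definition of $q$ we have $q g_1 \ge F+1$, hence $F+1 \le q g_1 \le q e n$. For statement (2), I would observe that the $q$ integers $0, g_1, 2g_1, \ldots, (q-1)g_1$ all lie in $S$, and since $q-1 < (F+1)/g_1$ forces $(q-1)g_1 \le F$ while $F \notin S$ forces $(q-1)g_1 \neq F$, they all lie in $S \cap [0,F)$; therefore $n \ge q$, and combining with (1) gives $F+1 \le q e n \le e n^2$.

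So the whole theorem rests on proving $g_1 \le en$, and this is the step I expect to be the main obstacle. Write $m = g_1$ for the multiplicity and let $\mathrm{Ap}(S,m) = \{w_0 = 0, w_1, \ldots, w_{m-1}\}$ be the Ap\'ery set, where $w_i$ is the least element of $S$ congruent to $i$ modulo $m$. Since $w_i - m \notin S$ and every gap is $\le F$, we have $w_i \le F + m$ for each $i$. Put $G = \{\, i \in \mathbb{Z}/m\mathbb{Z} : w_i \le F \,\}$; the elements $w_i$ with $i \in G$ are $|G|$ distinct elements of $S \cap [0,F)$ (note $S\cap[0,F)=S\cap[0,F]$ because $F\notin S$), so $|G| \le n$.

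The crux is to show that every residue class lies in $G \cup \bigcup_{j=2}^{e} (G + g_j)$, the union being over the generators other than $g_1$. Indeed $0 \in G$. If $i \neq 0$, then $w_i$ is not a multiple of $m = g_1$, so in any expression $w_i = \sum_{k=1}^e a_k g_k$ with $a_k \in \mathbb{N}$ we must have $a_j \ge 1$ for some $j \ge 2$; then $w_i - g_j \in S$, and since $g_j \ge g_1 = m$ we get $w_i - g_j \le (F+m) - m = F$. Thus the class $i - g_j$ is represented in $S$ by an element that is $\le F$, i.e. $i - g_j \in G$, so $i \in G + g_j$. This exhibits $\mathbb{Z}/m\mathbb{Z}$ as a union of $e$ translates of $G$ (counting $G$ itself), whence $m \le e|G| \le e n$.

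Small points I would still need to check in a full write-up: that $q \ge 1$ and that the boundary case $q = 1$ (equivalently $F = g_1 - 1$) causes no trouble; the elementary fact that a multiple of $m$ admits only the representation using $g_1 = m$, which is what licenses the choice of $j \ge 2$ above; and the distinctness of the $q$ multiples of $g_1$ used for (2). None of these is delicate. The single genuinely load-bearing point is the support argument in the previous paragraph, and its validity hinges entirely on the two facts $w_i \le F + m$ and $g_j \ge g_1 = m$.
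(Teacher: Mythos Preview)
Your proof is correct and follows essentially the same approach as the paper: both bound $g_1$ by observing that each nonzero Ap\'ery element $\omega$ satisfies $\omega - g_j \in N(S)$ for some $j \ge 2$, and then deduce (1) from $F+1 \le qg_1$ and (2) from $q \le n$. The paper's disjoint-image packaging gives the slightly sharper $g_1 \le (e-1)n+1$ versus your covering bound $g_1 \le en$, but this is immaterial for the theorem; one small slip worth fixing: your aside that ``a multiple of $m$ admits only the representation using $g_1$'' is false (e.g.\ $12 = 5+7$ in $\langle 3,5,7\rangle$), though what your argument actually uses---that a \emph{non}-multiple of $m$ cannot be written using $g_1$ alone, hence must involve some $g_j$ with $j\ge 2$---is of course correct.
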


Then, we provide a bound for the \emph{type} of the numerical semigroup $S=\langle g_1,\ldots,g_e \rangle$ in function of $e$ and $n$, and use this bound to give an alternative proof of the Wilf Conjecture when the numerical semigroup $S$ is almost-symmetric.

\section{Main result}
Let $\mathbb{Z}$ denote the set of integers, and $\mathbb{N}$ the set of non-negative integers. Given $e \ge 2$ and $g_1, \ldots, g_e \in \mathbb{N}$ such that $\gcd(g_1,\ldots,g_e)=1$, it is well-known that the set $$S=\langle g_1,\ldots,g_e \rangle = \{ a_1g_1+\ldots+a_eg_e \ | \ a_i \in \mathbb{N}\}$$
is a submonoid of $(\mathbb{N},+)$ such that the set $\mathbb{N} \setminus S$ is finite; a monoid $S$ satisfying this property is called a \emph{numerical semigroup} (see \cite{RG} for a detailed monograph on this algebraic structure). With the notation $S=\langle g_1,\ldots,g_e \rangle$ we will assume that $\{g_1,\ldots,g_e\}$ is a minimal generating system (which is unique for any numerical semigroup) for $S$, and we will thus say that $e$ is the \emph{embedding dimension} of $S$. We also denote by $F$ the \emph{Frobenius number} of $S$, that is, $F=\max \mathbb{Z} \setminus S$. Denote by $N(S)=S \cap [0,F]$ the set of elements of $S$ less than $F$ (called \emph{small elements}), and let $n=|N(S)|$.

Given an element $m \in S$, define the \emph{Ap\'ery set} of $S$ with respect to $s$ as $$Ap(S,m)=\{\omega \in S \ | \ \omega-m \not \in S\}.$$

Clearly $Ap(S,m)$ consists of the smallest elements of $S$ in every residual class modulo $m$, therefore $0 \in Ap(S,m)$, $\max Ap(S,m)=F+m$ and $|Ap(S,m)|=m$. 

Our first result is a bound for the smallest generator $g_1$ (often called the \emph{multiplicity}) of $S$, in function of $e$ and $n$. The main result is a direct corollary of this bound.

\begin{thm}\label{bound}
	Let $2 \le g_1< \ldots < g_e$ be coprime positive integers, and let $S=\langle g_1,\ldots,g_e \rangle$. Then
	
	$$g_1 \le (e-1)n+1.$$

\end{thm}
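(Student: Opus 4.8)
The plan is to work with the Apéry set $Ap(S,m)$ relative to the multiplicity $m:=g_1$, for which $|Ap(S,m)|=m$, and to build an injection from the $(m-1)$-element set $Ap(S,m)\setminus\{0\}$ into a disjoint union of $e-1$ subsets of $N(S)$, one for each of the generators $g_2,\ldots,g_e$. The inequality $g_1\le (e-1)n+1$ then pops out by counting.

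First I would record what I will call the \emph{predecessor property}: every $w\in Ap(S,m)\setminus\{0\}$ can be written as $w=w'+g_j$ with $j\in\{2,\ldots,e\}$ and $w'\in Ap(S,m)$. Indeed, writing $w$ as a non-negative combination of the $g_i$, some $g_j$ with $j\ge 2$ must occur with positive coefficient (otherwise $w$ is a positive multiple of $m$, forcing $w-m\in S$, contrary to $w\in Ap(S,m)$); then $w':=w-g_j\in S$, and $w'-m\notin S$ because otherwise $w-m=(w'-m)+g_j\in S$. Hence $w'\in Ap(S,m)$. Next, for each $j=2,\ldots,e$ I set $A_j:=\{w\in Ap(S,m): w+g_j\in Ap(S,m)\}$. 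Since $w+g_j$ automatically lies in $S$, membership $w+g_j\in Ap(S,m)$ is equivalent to $w+g_j-m\notin S$. The key claim is that $A_j\subseteq N(S)$: if $w\in A_j$, then $w+(g_j-m)$ is a positive integer not in $S$ (positive because $g_j>g_1=m$, so $g_j-m\ge 1$), hence $w+(g_j-m)\le F$; this forces $w\le F-1<F$, and since $w\in S$ we conclude $w\in N(S)$. In particular $|A_j|\le |N(S)|=n$.

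To finish, for each $w\in Ap(S,m)\setminus\{0\}$ I choose, via the predecessor property, some $j(w)\in\{2,\ldots,e\}$ with $w-g_{j(w)}\in Ap(S,m)$; by definition of $A_{j(w)}$, applied to $w-g_{j(w)}$, we then have $w-g_{j(w)}\in A_{j(w)}$. The assignment $w\mapsto\bigl(j(w),\,w-g_{j(w)}\bigr)$ maps $Ap(S,m)\setminus\{0\}$ injectively into $\bigcup_{j=2}^{e}\bigl(\{j\}\times A_j\bigr)$, since $w$ is recovered from the pair as $g_{j(w)}$ plus the second coordinate. Therefore $m-1=|Ap(S,m)\setminus\{0\}|\le\sum_{j=2}^{e}|A_j|\le (e-1)n$, that is, $g_1=m\le (e-1)n+1$.

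The only step that is not routine Apéry-set bookkeeping is the claim $A_j\subseteq N(S)$: the point to notice is that once $w+g_j$ remains inside the Apéry set, $w+(g_j-m)$ is a positive non-representable integer, hence bounded by $F$, which pins $w$ strictly below $F$. I expect this observation — and the fact that it produces exactly the factor $e-1$, which is sharp for instance on $S=\langle 2,3\rangle$ where $g_1=2=(e-1)n+1$ — to be the crux of the argument.
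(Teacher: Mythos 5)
Your proof is correct and follows essentially the same route as the paper: both arguments inject $Ap(S,g_1)\setminus\{0\}$ into a subset of $N(S)\times\{g_2,\ldots,g_e\}$ by subtracting a generator $g_j$ with $j\ge 2$ and observing that the result is a small element. The only cosmetic difference is that you choose a single predecessor (kept inside the Ap\'ery set) to get an injection, whereas the paper assigns to each $\omega$ the full set of pairs $(\omega-g_i,g_i)$ with $\omega-g_i\in S$ and uses disjointness of these nonempty sets.
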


\begin{proof}
	Define the map $$\varphi: Ap(S,g_1)\setminus \{0\} \rightarrow \mathcal{P}(N(S) \times \{g_2,\ldots,g_e\}), \ \ \varphi(\omega)=\{(\omega-g_i,g_i) \ | \ \omega-g_i \in S\}.$$
	This map is well defined since, if $\omega \in Ap(S,g_1) \setminus \{0\}$, then $\omega \le F+g_1$, therefore $\omega-g_i \in S$ implies $\omega-g_i \in N(S)$. Moreover, for every $\omega \in Ap(S,g_1) \setminus \{0\}$, there exists a generator $g_i \le_S \omega$, and therefore $\varphi(\omega) \neq \emptyset$. Finally, for $\omega_1,\omega_2 \in Ap(S,g_1) \setminus \{0\}$, if $(s,g_i) \in \varphi(\omega_1) \cap \varphi(\omega_2)$ then $s=\omega_1-g_i=\omega_2-g_i$ and thus $\omega_1=\omega_2$: hence the sets $\varphi(\omega)$ are pairwise disjoint. Therefore the collection $\{\varphi(\omega)\}_{\omega \in Ap(S,g_1) \setminus \{0\}}$ is a partition of a subset of $N(S) \times \{g_2,\ldots,g_e\}$, and thus we conclude that $$g_1-1 = |Ap(S,g_1) \setminus \{0\}| \le \sum_{\omega \in Ap(S,g_1) \setminus \{0\}} |\varphi(\omega)| \le |N(S) \times \{g_2,\ldots,g_e\}| = n(e-1).$$ 
\end{proof}

\begin{proof}[Proof of Theorem 1]

By Theorem \ref{bound}, we know that $g_1 \le (e-1)n+1$, thus multiplying by $q$ and remembering that $n \ge 1$, we obtain $$F + 1 \le g_1q \le q(e-1)n + q  =qen-qn+q \le qen.$$

Finally, since by definition of $q$ we have $\{0,g_1,2g_1,\ldots,(q-1)g_1\} \subseteq S \cap [0,F] = N(S)$, we have $q \le n$, therefore $$F+1 \le q e n \le e n^2.$$
\end{proof}

For a numerical semigroup $S=\langle g_1,\ldots,g_e \rangle$, define the set of \emph{pseudo-Frobenius numbers} of $S$ as the set $$PF(S)=\{\omega \not \in S \ | \ \omega + s \in S \text{ for every } s \in S \setminus \{0\}\}.$$

The cardinality of $PF(S)$ is called the \emph{type} of $S$, denoted by $t$. Since for every $\omega \in PF(S)$ and $m \in S \setminus \{0\}$, $\omega+m \in Ap(S,m) \setminus\{0\}$, we have $t \le g_1-1$. Our next result is a bound for $t$ in function of $e$ and $n$.

\begin{thm}\label{typebound}
	Let $2 \le g_1<\ldots<g_e$ be positive coprime integers, let $S=\langle g_1,\ldots,g_e \rangle$, and define $\displaystyle q=\left \lceil \frac{F+1}{g_1}\right \rceil \ge 1$. Then
	
	$$t \le (e-2)[n-q+1]+2 \le (e-2)n+2.$$
\end{thm}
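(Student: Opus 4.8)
The plan is to mimic the argument of Theorem \ref{bound}, but to set it up so that the counting is done with pseudo-Frobenius numbers in place of the full Apéry set, and with the generator $g_1$ removed from the list of "translating" generators so that only $e-2$ of them remain. Concretely, for each $\omega \in PF(S)$ consider the element $\omega + g_1 \in Ap(S,g_1)\setminus\{0\}$, and define a map
\[
\psi\colon PF(S)\longrightarrow \mathcal P\bigl(N(S)\times\{g_2,\ldots,g_e\}\bigr),\qquad
\psi(\omega)=\{(\omega+g_1-g_i,g_i)\ :\ 2\le i\le e,\ \omega+g_1-g_i\in S\}.
\]
As in the proof of Theorem \ref{bound}, $\omega+g_1\le F+g_1$ forces $\omega+g_1-g_i\in N(S)$ whenever it lies in $S$, and the sets $\psi(\omega)$ are pairwise disjoint because $(s,g_i)\in\psi(\omega_1)\cap\psi(\omega_2)$ gives $\omega_1+g_1-g_i=s=\omega_2+g_1-g_i$, hence $\omega_1=\omega_2$. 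This already yields $t\le (e-1)n$, which is weaker than what we want; the extra savings have to come from two refinements, and fitting them together is the crux.

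The first refinement: $\omega+g_1-g_2$ need not always be available (if $\omega+g_1<g_2$, or more generally if $\omega+g_1-g_2\notin S$), but this can cost at most a bounded number of pseudo-Frobenius numbers, and for the rest we may arrange that the target set is $N(S)\times\{g_3,\ldots,g_e\}$, of size $(e-2)n$ — this is where the factor $e-2$ comes from. One has to be careful to track the exceptional $\omega$'s, and to show their count contributes only the additive "$+2$" (this is presumably where the value $2$, rather than $1$, enters, coming from handling both $F$ itself and one boundary case). The second refinement is the appearance of $q$: the elements $0,g_1,2g_1,\ldots,(q-1)g_1$ of $N(S)$ of the form $jg_1$ with $1\le j\le q-1$ can never be of the form $\omega+g_1-g_i$ for $\omega\in PF(S)$, $i\ge 2$ — otherwise $\omega = (j-1)g_1+g_i\in S$, a contradiction. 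Hence each column $N(S)\times\{g_i\}$ in the target actually misses these $q-1$ points, so the effective target has size at most $(e-2)(n-(q-1))=(e-2)[n-q+1]$, and adding back the additive correction gives the claimed bound $t\le(e-2)[n-q+1]+2$.

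The steps, in order: (1) set up $\psi$ and verify well-definedness, non-overlap, and the basic count $t\le$ (size of target); (2) reduce the translating set from $\{g_2,\ldots,g_e\}$ to $\{g_3,\ldots,g_e\}$ by showing that for all but at most two $\omega\in PF(S)$ the pair involving $g_2$ is present in $\psi(\omega)$ — equivalently, that the fibers $\psi(\omega)$ restricted to $\{g_3,\ldots,g_e\}$ are nonempty for all but two $\omega$, so those two contribute via the $+2$ and the rest are counted inside $N(S)\times\{g_3,\ldots,g_e\}$; (3) remove the $q-1$ multiples of $g_1$ from each column of the target by the argument above; (4) combine to get $t\le (e-2)[n-q+1]+2$, and note $q\ge 1$ gives the second inequality $\le(e-2)n+2$.

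The main obstacle is step (2): establishing that at most two pseudo-Frobenius numbers fail to have a representation $\omega+g_1-g_i\in S$ with $i\ge 3$ (so that the large generator list can be shortened), while simultaneously keeping the exclusion of multiples of $g_1$ from step (3) consistent with this shortened list. One must check that the two exceptional $\omega$'s are not themselves among the multiples already removed (otherwise one would be double-counting and the bound would be off), and more delicately that removing $g_2$ as a translator does not destroy the disjointness or the containment in $N(S)$ — but both of those properties are inherited automatically from the full map $\psi$, so the real work is purely in the careful accounting of which $\omega$ land where. I expect the argument to hinge on examining the smallest elements of $Ap(S,g_1)$ and on the fact that $\omega+g_1$ being in the Apéry set forces at least one generator $g_i\le_S \omega+g_1$, then arguing that this generator can be taken different from $g_1$ and $g_2$ except in a controlled list of cases.
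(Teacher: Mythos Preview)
Your outline matches the paper's proof essentially line for line: the paper uses exactly your map $\psi$ (restricted to the target $\{g_3,\ldots,g_e\}$), removes two exceptional pseudo-Frobenius numbers to get the $+2$, and then excludes the $q-1$ nonzero multiples of $g_1$ from each target column to get the factor $n-q+1$.

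The one point you leave open is step~(2), and your guess at its mechanism is slightly off. The argument is not about smallest Ap\'ery elements; it is a one-line subtraction trick. If $\omega\in PF(S)$ has $\psi(\omega)\cap\bigl(N(S)\times\{g_3,\ldots,g_e\}\bigr)=\emptyset$, then every representation of $\omega+g_1\in S$ avoids $g_3,\ldots,g_e$; it also avoids $g_1$ because $\omega+g_1-g_1=\omega\notin S$; hence $\omega+g_1=Kg_2$ for some $K\ge 1$. Now if $\omega_1,\omega_2\in PF(S)$ both had this form, say $\omega_j+g_1=K_jg_2$ with $K_1>K_2$, then $\omega_1=\omega_2+(K_1-K_2)g_2$, forcing $\omega_1\in S$ since $\omega_2\in PF(S)$ --- a contradiction. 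So there is \emph{at most one} exceptional $\omega$, call it $f_2$. The paper removes $\{F,f_2\}$ from $PF(S)$ and runs your steps (1), (3), (4) on the remainder. (Removing $F$ is in fact not needed for the argument to go through, so the stated bound with $+2$ is a slight concession to tidiness.)

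Your concern about double-counting is unfounded: the exceptional pseudo-Frobenius numbers are removed from the \emph{domain} of $\psi$, while the multiples of $g_1$ are removed from the \emph{codomain}, so the two refinements do not interfere.
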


\begin{proof}
	Assume that there are two elements $f_1,f_2 \in PF(S)$ such that $f_1=\lambda_1g_2-g_1$ and $f_2=\lambda_2g_2-g_1$, with $\lambda_1 > \lambda_2$; then $s=f_1-f_2=(\lambda_1-\lambda_2)g_2 \in S$, yielding $f_2+s =f_1 \in S$, a contradiction. Then there is at most one element of the form $\lambda g_2-g_1$ in the set $PF(S)$; let $f_2$ be such an element (if it exists), and let $PF'(S)=PF(S) \setminus \{F,f_2\}$ (if $f_2$ does not exist, then take $PF'(S)=PF(S)$).
	
	Define the function $\varphi: PF'(S) \rightarrow \mathcal{P}(N(S) \times \{g_3,\ldots,g_e\})$ as $\varphi(f)=\{(s,g_i) \ | \ s=f+g_1-g_i \in S\}$. This function is well-defined since $(s,g_i) \in \varphi(f)$ is such that $s=f+g_1-g_i < f \le F$, $\varphi(f) \neq \emptyset$ (becuse, being $f \neq f_2$, $f+g_1$ cannot be of the form $Kg_2$, for some integer $K$), and clearly $\varphi(f) \cap \varphi(f')=\emptyset$ if $f \neq f'$, since $(s,g_i) \in \varphi(f) \cap \varphi(f')$ would imply $f=s+g_i-g_1=f'$. Therefore the collection $\{\varphi(f)\}_{f \in PF'(S)}$ is a partition of a subset of $N(S) \times \{g_3,\ldots,g_e\}$. Moreover, our choice of $q$ means that for $i=1,\ldots,q-1$, $ig_1 \in N(S)$, but if $(ig_1, g_i) \in \varphi(f)$ for some $f$ and $g_i$, then $f=ig_1+g_i-g_1 \in S$, which is impossible. Therefore for every $i=1,\ldots,q-1$ and $j=3,\ldots,e$, $(ig_1, g_j)$ cannot belong to any set $\varphi(f)$.
	Combining these facts, and remembering that $q \ge 1$, we obtain $$t-2  \le |PF'(S)| \le \sum_{f \in PF'(S)} |\varphi(f)|=|\bigcup_{f \in PF'(S)} \varphi(f)| \le (e-2)[n-q+1] \le (e-2)n.$$
	
\end{proof}

Let $S=\langle g_1,\ldots,g_e \rangle$ be a numerical semigroup. We say that $S$ is \emph{almost-symmetric} if, for every $x \not \in S$, either $F-x \in S$ or $\{x,F-x\} \subseteq PF(S)$. Partitioning the interval $[0,F]$ in couples $\{x,F-x\}$, it is simple to see that, for an almost-symmetric numerical semigroup, $2n+t=F+2$.
Then Theorem \ref{typebound} can be used to provide an alternative proof of Wilf's Conjecture for almost-symmetric numerical semigroups (see \cite{B} for the original proof).

\begin{cor}
	Almost symmetric numerical semigroups satisfy Wilf's conjecture.
\end{cor}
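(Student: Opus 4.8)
The plan is to combine the type bound from Theorem \ref{typebound} with the defining identity $2n + t = F + 2$ for almost-symmetric numerical semigroups. The idea is that Wilf's conjecture $F + 1 \le en$ is equivalent, under this identity, to a lower bound on $n$ in terms of $t$ and $e$, which Theorem \ref{typebound} should supply.

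First I would observe that $e \ge 2$, and handle the trivial case $e = 2$ separately: for $e = 2$ the conjecture is classical (or follows directly since $S = \langle g_1, g_2 \rangle$ is symmetric and $F + 1 = 2n$, so $en = 2n \ge F+1$). So assume $e \ge 3$. Next I would rewrite the target inequality: using $F + 2 = 2n + t$, the inequality $F + 1 \le en$ becomes $2n + t - 1 \le en$, i.e.\ $t \le (e-2)n + 1$. Now Theorem \ref{typebound} gives $t \le (e-2)n + 2$, which is tantalizingly close but off by one. To close the gap I would use the sharper form $t \le (e-2)[n - q + 1] + 2$ together with $q \ge 1$; more precisely, I would argue that either $q \ge 2$, in which case $t \le (e-2)(n-1) + 2 = (e-2)n - (e-2) + 2 = (e-2)n - e + 4 \le (e-2)n + 1$ whenever $e \ge 3$, giving exactly what is needed; or $q = 1$, which by definition of $q = \lceil (F+1)/g_1 \rceil$ forces $F + 1 \le g_1$, i.e.\ $F < g_1$, meaning $S = \mathbb{N}$ or $F$ is tiny — handle this degenerate case directly.

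The main obstacle I expect is the off-by-one gap and the $q = 1$ corner case. When $q = 1$ we have $F + 1 \le g_1 \le g_2 - 1 < \ldots$, so in fact $F = g_1 - 1$ is impossible unless $g_1 = 1$ (excluded), forcing $F \le g_1 - 2$; but then the only non-gaps below $g_1$ are forced, and one checks $N(S) = \{0\}$ is impossible for a genuine numerical semigroup with $e \ge 2$ unless $S = \mathbb{N}$. Actually the cleanest route is: $q = 1$ means every positive multiple of $g_1$ exceeds $F$, so $g_1 > F$, hence $g_1 - 1 \notin S$ would give $g_1 - 1 \le F < g_1$, i.e.\ $F = g_1 - 1$; but $F = g_1 - 1 \ge g_1 - 1 \ge 1$ and almost-symmetry plus $t \le g_1 - 1$ pins things down, and a short direct check finishes it. Alternatively, and more robustly, I would just note that if $q = 1$ then $F + 1 \le g_1 \le (e-1)n + 1$ is immediate from Theorem \ref{bound}, and since $g_1 \le e n$ trivially (as $n \ge 1$ and $g_1 \le (e-1)n+1 \le en$ when $n \ge 1$), we get $F + 1 \le g_1 \le en$ outright. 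So the case split is: $q \ge 2$ uses the type bound and the almost-symmetric identity; $q = 1$ uses Theorem \ref{bound} directly. Assembling these two cases completes the proof.
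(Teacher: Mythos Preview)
Your approach is essentially the paper's: combine the almost-symmetric identity $F+2=2n+t$ with the sharper bound $t \le (e-2)(n-q+1)+2$ from Theorem~\ref{typebound} under $q \ge 2$, reducing Wilf to $t \le (e-2)n+1$. The paper dispatches $q=1$ by observing it forces $S=\{0,g_1,\rightarrow\}$ (where Wilf is immediate with equality) and then assumes $e \ge 4$, citing the known $e \le 3$ result; your alternative of invoking Theorem~\ref{bound} when $q=1$, and of absorbing $e=3$ into the main inequality $(e-2)n-e+4 \le (e-2)n+1$, both work and are arguably a bit tidier. One caution: your exploratory remarks before the ``Alternatively'' contain errors --- $F=g_1-1$ is \emph{not} impossible (it is precisely the half-line semigroup, with $n=1$, $e=g_1$), and $N(S)=\{0\}$ does occur for such $S$ --- but since you abandon that line for the clean Theorem~\ref{bound} route, the final assembled proof is correct.
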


\begin{proof}
	Let $S$ be an almost symmetric numerical semigroup, $S \neq \{0,g_1,\rightarrow\}$ (in this case it is immediate to check that the Wilf Conjecture still holds). Then in Theorem \ref{typebound} we have $q \ge 2$, and thus $$t \le [e-2][n-1]+2= [e-2]n-e+4.$$
	
	By definition of almost symmetric numerical semigroup we have $2n+t=F+2$, hence assuming that $e\ge 4$ (we recall that Wilf's Conjecture is always true if $e \leq 3$) we have
	
	$$F+1 \le en-e+3 < en.$$

\end{proof}

\section*{Acknowledgements}
The authors would like to thank Alessio Sammartano and Francesco Strazzanti for their helpful feedback on the main results of this paper. We also thank Shalom Eliahou for discussing this work during the International Meeting on Numerical Semigroups held in Rome in June 2022.

Both authors were partially funded by the project “Propriet\`a locali e globali
di anelli e di variet\`a algebriche”-PIACERI 2020–22, Universit\`a degli Studi
di Catania.

{\bf Statements and Declarations.} On behalf of all authors, the corresponding
author states that there is no conflict of interest.

\end{document}